\documentclass{article}

\usepackage{lastpage,graphicx,epstopdf,amsmath,amsfonts,amssymb,setspace,url,epsfig,amsthm,dsfont,upgreek,bbding,makeidx,multicol,multirow,mathrsfs}

\usepackage{amsfonts}
\usepackage{dsfont}
\usepackage{amsmath}
\usepackage{amssymb}
\usepackage{mathrsfs}
\usepackage{graphicx}
\usepackage{amscd}
\newtheorem{thm}{Theorem}

\newtheorem{lem}[thm]{Lemma}

\begin{document}
	
	\title{Generic Existence of Independent Families}
	\author{Michael Perron}
	\maketitle
	
	\begin{abstract}
		We apply the concept of generic existence to p-point, q, and selective independent families that complements and emulates the ultrafilter generic existence results from Canjar and Ketonen.
	\end{abstract}

\section{Introduction}
	All independent families considered in this paper are infinite on a countably infinite set. For convenience we define independent families on $\omega$ and we use notation consistent with \cite{Perron} and \cite{Shelah}. For any set $A \subseteq \omega$ let $A^0 = A$ and $A^1 = \omega \setminus A$. For any set $I$ let FF($I$) = $\{h : h: I \rightarrow 2,$ dom($h$) is finite $\}$. For any set $I$ and any $h \in$ FF($I$) let $I^h = \bigcap \{ A^{h(A)} : A \in$ dom$(I) \}$. An infinite family of sets $I$ is called independent if for every $h \in$ FF($I$) the set $I^h$ is non-empty. If $I$ is an independent family we call ENV($I) = \{I^h : h \in$ FF($I)\}$ the envelope of $I$.
	
	An independent family is called selective if every function on $\omega$ is either one-to-one or constant on a set in the envelope. We say that an independent family is everywhere selective if for every function $f$ on $\omega$ and every $h \in$ FF($I$) we can extend $h$ to some $k \in$ FF($I$) such that $f$ is one-to-one or constant on $I^k$. A p-point-independent family is one where every function on $\omega$ is either finite-to-one or constant on a set in its envelope, and a q-independent family is one where every finite-to-one function on $\omega$ is one-to-one on a set in its envelope. We can define everywhere p-point and everywhere q in the same fashion as we defined everwhere selective.
	
	Independent families have a naturally associated filter. If $I$ is an independent family then Fil$_I = \{ A \subseteq \omega:$ if $h \in$ FF($I$) then $h$ can be extended to $k \in$ FF($I$) such that $I^k \subseteq A \}$ is a filter. We say that an independent family $I$ has a q-filter if every finite-to-one function on $\omega$ is one-to-one on a set in Fil$_I$.
	
	It is consistent that all of the various types of independent families we defined exist under CH, and several diamond principles (see \cite{Perron} and \cite{Moore}). For the consistency of non-existence, Shelah in \cite{Shelah2} gave a model where q-filters do not exist so it is consistent that independent families with q-filters do not exist.
	
	Since we are closely following a paper based on ultrafilters, which are maximal filters, we will mention how our defined independent families related to maximal independent families. A maximal independent family is any independent family $I$ such that if $A \subseteq \omega$ and $A \not \in I$ then $I \cup \{ A\}$ is not independent. Selective independent and p-point-independent families are maximal, but q-independent families need not be (see \cite{Perron}).
	
	Let $\mathfrak{c}$ denote the cardinality of all functions on $\omega$. If $f$ and $g$ are functions on $\omega$ we say that $f$ dominates $g$ if for all $n \in \omega$ $f(n) > g(n)$. We say a family $F$ is a dominating family if for every function $f$ on $\omega$ there is a $g \in F$ such that $g$ dominates $f$. Let $\mathfrak{d}$ be the minimum cardinality of a dominating family of functions on $\omega$. Let $\mathfrak{m}_c$ denote the minimum cardinality of a family $H$ of functions on $\omega$ where for any function $g$ on $\omega$ there is some $h \in H$ such that $g(n) \not = h(n)$ for all $n \in \omega$. From the definitions it is apparent that $\mathfrak{m}_c \leq \mathfrak{d} \leq \mathfrak{c}$.
	
    The cardinal $\mathfrak{m}_c$ is equivalent to, and get its namesake from, Martin's axiom for countable partial orders, where if $P$ is a countable partial order and $\mathcal{D}$ is a collection of less than $\mathfrak{m}_c$-many dense sets on $P$ then there is a filter $G$ on $P$ where $G \cap D \not = \emptyset$ for every $D \in \mathcal{D}$.

	
	

\section{Results}

For an uncountable cardinal $\kappa$ we say that selective independent families exist $\kappa$-generically if whenever $I$ is an independent family with $|I| < \kappa$ then $I$ can be extended to a selective independent family. When $\kappa = \mathfrak{c}$ then we just say that selective independent families exist generically. The generic existence of other varieties of independent families are defined similarly.

\begin{thm}
	The following are equivalent:
    \begin{enumerate}
    	\item $m_c = \mathfrak{c}$
    	\item Everywhere selective independent families exist generically.
    	\item Selective independent families exist generically.
    \end{enumerate}
\end{thm}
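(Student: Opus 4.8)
The plan is to prove the cycle $(1)\Rightarrow(2)\Rightarrow(3)\Rightarrow(1)$. The implication $(2)\Rightarrow(3)$ is immediate: applying the everywhere-selective clause with $h=\emptyset$ shows that every everywhere selective family is selective, so any generic extension to an everywhere selective family is in particular a selective one. For $(1)\Rightarrow(2)$, assume $\mathfrak{m}_c=\mathfrak{c}$; equivalently, $\mathrm{MA}_{\mathrm{ctble}}(\kappa)$ holds for all $\kappa<\mathfrak{c}$. Given $I_0$ with $|I_0|<\mathfrak{c}$, I would build an increasing chain $\langle I_\alpha:\alpha<\mathfrak{c}\rangle$ of independent families of size $<\mathfrak{c}$, adding one set $A_\alpha$ per stage, with $I=\bigcup_\alpha I_\alpha$. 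A bookkeeping of length $\mathfrak{c}$ enumerates all pairs $(f,h)$ with $f:\omega\to\omega$ and $h\in\mathrm{FF}(I)$ (each $h$ becomes available once its finitely many sets have appeared). Everything reduces to a single \emph{extension lemma}: given independent $J$ with $|J|<\mathfrak{c}$, a function $f$, and $h\in\mathrm{FF}(J)$, there is $A$ with $J\cup\{A\}$ independent and $f$ one-to-one or constant on $J^h\cap A$. Since this piece depends only on coordinates already present, it survives into $\mathrm{ENV}(I)$, so treating every $(f,h)$ yields everywhere selectivity.

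To prove the extension lemma I would force with the countable poset of finite partial maps $p:\omega\to 2$ approximating $\chi_A$, meeting the $<\mathfrak{c}$ dense sets $D^i_j=\{p:\exists m\in J^j\ p(m)=i\}$ for $i\in\{0,1\}$ and $j\in\mathrm{FF}(J)$, which force $A$ to split every $J^j$ and hence keep $J\cup\{A\}$ independent. The behaviour of $f$ splits into two regimes. If $f$ is finite-to-one on $J^h$, then every infinite sub-piece $J^j\subseteq J^h$ is infinitely-valued, so I add to the poset the constraint that $f$ be injective on $\{m\in J^h:p(m)=1\}$; each $D^1_j$ with $j\supseteq h$ stays dense because a fresh $f$-value is always available in $J^j$, and meeting them all makes $f$ one-to-one on $J^h\cap A$. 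If instead $f$ has an infinite fibre $f^{-1}(v)\cap J^h$, I aim to fix the trace $A\cap J^h=f^{-1}(v)\cap J^h$, forcing $J^h\cap A\subseteq f^{-1}(v)$ to be constant.

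The hard part is exactly this constant regime, and it is where $\mathfrak{m}_c=\mathfrak{c}$ must do the work. Fixing the trace inside one fibre is compatible with independence only when $f^{-1}(v)$ splits every sub-piece $J^j$ with $j\supseteq h$: a sub-piece lying inside $f^{-1}(v)$ is already constant, but a sub-piece disjoint from $f^{-1}(v)$ would make $J^{j\cup\{(A,0)\}}=\emptyset$ and destroy independence. The delicate point is to choose, generically, a value and trace that keep $J^h\cap A$ in one fibre while still meeting every $J^j$. I would handle this by arranging the bookkeeping so that each infinite fibre of $f$ is first captured as a member of the family (which can fail only when the fibre does not split some piece, in which case that piece already decides $v$ and the task is met on a refinement), thereby reducing $f$ to the finite-to-one regime on every piece before its constant requirement is addressed. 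Verifying that this capturing can always be performed while preserving independence, using $\mathrm{MA}_{\mathrm{ctble}}(<\mathfrak{c})$ to meet the splitting requirements on the complementary part of $A$, is the crux of the construction.

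For $(3)\Rightarrow(1)$ I argue by contraposition: assume $\mathfrak{m}_c<\mathfrak{c}$ and fix $H=\{h_\eta:\eta<\mathfrak{m}_c\}$ with every $g:\omega\to\omega$ totally different from some $h_\eta$. Partition $\omega$ into finite blocks $B_n$ with $|B_n|=n+1$, identify $B_n$ with $\{0,\dots,n\}$, and let $F$ be the block-collapse $F\upharpoonright B_n\equiv n$. All fibres of $F$ are finite, so $F$ is never constant on an infinite set, and a short computation shows that $F$ can be made one-to-one on a piece of some independent extension of $I$ if and only if some infinite \emph{selector} (at most one point per block) is independent over $I$. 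It therefore suffices to build an independent family $I$ with $|I|=\mathfrak{m}_c<\mathfrak{c}$ over which no infinite selector is independent, for then $F$ witnesses that $I$ has no selective extension. I would realise each $h_\eta$ as a disjointifying selector-piece of $I$, so that an arbitrary infinite selector $g$, being totally different from some $h_\eta$, is disjoint from the corresponding piece and hence fails to split it. The main obstacle here is the encoding: one must interleave a base independent family with the sets coding $H$ so that the whole family is independent while its envelope pieces faithfully present the $h_\eta$ as selectors, and then check that disjointness from a single coded piece really does block the independence of $g$.
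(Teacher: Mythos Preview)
Your cycle structure and the trivial implication $(2)\Rightarrow(3)$ match the paper. For $(1)\Rightarrow(2)$ the paper simply cites an earlier result (Corollary~59 of \cite{Perron}) and gives no argument, so there is nothing to compare; your outline via an extension lemma and $\mathrm{MA}_{\mathrm{ctble}}$ is plausible, though the ``constant regime'' is, as you note, the delicate part and the capturing strategy would need to be made precise.

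The real gap is in $(3)\Rightarrow(1)$, and it sits exactly at the encoding step you yourself flag as the main obstacle. Your construction lives on $\omega$ with finite blocks $B_n\cong\{0,\dots,n\}$; a selector then amounts to a function $g$ with $g(n)\le n$, and ``realising $h_\eta$ as a selector-piece'' so that total difference from $h_\eta$ forces disjointness would require $h_\eta(n)\le n$ as well. But the family $H$ witnessing $\mathfrak m_c<\mathfrak c$ consists of arbitrary functions in $\omega^\omega$, so the encoding as stated does not go through without an additional, non-obvious argument that $\mathfrak m_c$ has an equivalent bounded characterisation. The paper resolves this by changing the underlying countable set: instead of $\omega$ it works on $X=\bigcup_n X_n$ with $X_n=\{p\mid p:B_n\to\omega\}$, the set of all $\omega$-valued functions on the $n$th block. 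Each $h\in H$ is encoded by $A_h=\{p\in X:\exists b\in\mathrm{dom}(p)\ p(b)=h(b)\}$, and because the $p$'s may take arbitrary values one checks directly (using only that $H$ is almost disjoint) that $\{A_h:h\in H\}$ together with a lifted countable independent family $J'$ is independent on $X$. Selectivity of an extension gives an envelope piece $E$ on which the level map $N$ is one-to-one (the family $J'$ rules out the constant option); then $\bigcup E$ is a partial function on $\omega$ which, since $E\cap A_h\neq\emptyset$ for every $h$ not among the finitely many coordinates of $E$, agrees with all but finitely many $h\in H$, and a finite correction finishes. This passage from $\omega$ to the space $X$ of block-functions is precisely the missing idea in your sketch.
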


\begin{thm}
	The following are equivalent:
	\begin{enumerate}
		\item $\mathfrak{d} = \mathfrak{c}$
		\item Everywhere p-point-independent families exist generically.
		\item P-point-independent families exist generically.
	\end{enumerate}
\end{thm}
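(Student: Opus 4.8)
My plan is to prove the cycle $(1)\Rightarrow(2)\Rightarrow(3)\Rightarrow(1)$. The implication $(2)\Rightarrow(3)$ is immediate: an everywhere p-point-independent family is in particular p-point-independent, since applying the everywhere condition to the empty condition $h=\emptyset$ shows that every function on $\omega$ is finite-to-one or constant on some envelope set. Hence generic existence of the everywhere version yields generic existence of the plain version, and I only have to treat the two substantive directions.

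For $(1)\Rightarrow(2)$, assume $\mathfrak{d}=\mathfrak{c}$ and let $I_0$ be an independent family with $|I_0|<\mathfrak{c}$. I would build an increasing chain $\langle I_\alpha : \alpha<\mathfrak{c}\rangle$ of independent families, each of size $<\mathfrak{c}$, taking unions at limits (which preserves independence), using a bookkeeping that enumerates all pairs $(f,h)$ with $f$ a function on $\omega$ and $h$ a finite partial function into the eventual family, so that every such pair is handled at some stage. The whole content is the single-step lemma: given an independent family $I$ with $|I|<\mathfrak{c}$, a function $f$, and a condition $h$, one can adjoin a single set $A$ so that $I\cup\{A\}$ is still independent and $f$ is finite-to-one or constant on $I^{h}\cap A$. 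If $f$ takes only finitely many values on some refinement of $I^h$ I instead isolate a constant piece; so assume $f$ takes infinitely many values on every $I^k$ with $k\supseteq h$. I want $B:=A\cap I^h$ to be finite-to-one for $f$ while $A$ meets and co-meets every node $I^k$ (which is exactly what independence of $I\cup\{A\}$ demands). The key reformulation is that ``$B$ meets every node below $h$'' is equivalent to the types realised by the points of $B$ being dense in the clopen set $[h]$ of the Cantor-type space $2^{I}$, and this density must be achieved by a single countable set $B$ on which $f$ is nevertheless finite-to-one.

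This is where $\mathfrak{d}=\mathfrak{c}$ enters, and it is the main obstacle. Since $|I|<\mathfrak{d}$, for each node $k\supseteq h$ I define a search function $g_k(m)=$ the least $j$ such that the $j$-th element of $I^h$ lies in $I^k$ and has $f$-value at least $m$ (well defined because $f$ is unbounded on $I^k$). The family $\{g_k\}$ has size $<\mathfrak{d}$, hence is not dominating, so there is a $g$ not eventually dominated by any $g_k$; that is, for every $k$ there are infinitely many $m$ with $g(m)>g_k(m)$. I then recurse on $\omega$, using $g$ to keep reaching, inside every node, points of ever larger $f$-value and placing them in $B$: the escaping property guarantees that each $I^k$ receives infinitely many points of $B$ (so the types of $B$ are dense and independence is preserved), while adding only points of large, fresh $f$-value keeps $B\cap f^{-1}(n)$ finite for each $n$ (so $B$ is finite-to-one). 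I complete $A$ by choosing $A\setminus I^h$ to split the remaining nodes, which is routine. Iterating the lemma produces an everywhere p-point-independent family extending $I_0$. The bookkeeping is transparent when $\mathfrak{c}$ is regular; for singular $\mathfrak{c}$ one must thread the pairs through the stages so the family stays of size $<\mathfrak{c}$ throughout, which I would verify separately.

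For $(3)\Rightarrow(1)$ I argue the contrapositive: if $\mathfrak{d}<\mathfrak{c}$ I exhibit an independent family of size $\le\mathfrak{d}<\mathfrak{c}$ admitting no p-point-independent extension. Fixing a function $f$ with all fibres infinite, I would use a dominating family $\{d_\xi:\xi<\mathfrak{d}\}$ to build an independent family $I$ whose envelope is so spread across the fibres of $f$ that the search functions attached to its nodes become a dominating family; then no finite-to-one set can meet every node, so no independent extension of $I$ makes $f$ finite-to-one on an envelope set, and I simultaneously arrange that no envelope set is confined to finitely many fibres, blocking the constant alternative. Thus $f$ witnesses that $I$ has no p-point-independent extension. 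The delicate point here, dual to the obstacle above, is to encode the dominating family into the independent structure so that the obstruction defeats every extension rather than a one-step refinement; I expect this to require interleaving the $d_\xi$ with the tree of conditions so that every branch is forced to climb past each $d_\xi$.
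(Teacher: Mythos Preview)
Your $(2)\Rightarrow(3)$ is fine, and your single-step lemma for $(1)\Rightarrow(2)$ is a plausible line; the paper itself only cites \cite{Perron} for $(1)\Rightarrow(3)$ and leaves the everywhere version as an exercise, so there is nothing detailed to compare against there.

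The real divergence, and the real gap, is in $(3)\Rightarrow(1)$. The paper does \emph{not} try to build an independent family that resists all p-point-independent extensions. It runs the implication forward: given any family $H$ of fewer than $\mathfrak{c}$ functions (taken almost disjoint), it works on the countable set $X=\bigcup_n X_n$ where $X_n=\{p\mid p:B_n\to\omega\}$ for a fixed partition $\{B_n\}$ of $\omega$ with $|B_n|=n$, and lets $N:X\to\omega$ be the level map $p\mapsto n$. Each $h\in H$ is coded as $A_h=\{p\in X: p(b)=h(b)\text{ for some }b\in B_{N(p)}\}$; together with a lifted countable independent family this gives an independent family $I$ on $X$ of size $|H|<\mathfrak{c}$. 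Generic existence yields a p-point-independent extension $I'$, and some $E\in\mathrm{ENV}(I')$ has $N\upharpoonright E$ finite-to-one (constancy is ruled out by the lifted family). Then for each $n$ only finitely many $p\in E$ have $n$ in their domain, so one defines $g(n)$ to exceed all their values; since each $A_h$ meets $E$, this $g$ is not dominated by any $h\in H$. Thus $H$ is not dominating and $\mathfrak{d}=\mathfrak{c}$.

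Your contrapositive plan asks for much more: an $I$ of size $\mathfrak{d}$ and a single $f$ such that \emph{no} independent $I'\supseteq I$ has any envelope set on which $f$ is finite-to-one. But envelope sets of extensions are refinements by arbitrary new generators and can be thin subsets of any node of $I$; saying that ``the search functions attached to its nodes become a dominating family'' only constrains the nodes of $I$ itself, not those of extensions. You flag exactly this as the unresolved ``delicate point'', and indeed nothing in your sketch prevents an adversary from adjoining a single set $A$, split across the nodes of $I$, on which $f$ is finite-to-one. So as written the argument is incomplete. The paper's coding trick sidesteps the whole difficulty: rather than blocking extensions, it \emph{uses} an extension to manufacture a non-dominated function.
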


\begin{thm}
	The following are equivalent:
	\begin{enumerate}
		\item $\mathfrak{m}_c = \mathfrak{d}$
		\item Independent families with q-filters exist $\mathfrak{d}$-generically.
		\item Everywhere q-independent families exist $\mathfrak{d}$-generically.
		\item Q-independent families exist $\mathfrak{d}$-generically.
	\end{enumerate}
\end{thm}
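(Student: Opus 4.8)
The plan is to run the cycle (1)$\Rightarrow$(3)$\Rightarrow$(4)$\Rightarrow$(1) together with (1)$\Rightarrow$(2)$\Rightarrow$(4), so that (2) enters through (2)$\Rightarrow$(4)$\Rightarrow$(1)$\Rightarrow$(2). The engine behind every implication is one reduction: a finite-to-one $f$ is one-to-one on a set $A$ exactly when $A$ is a selector for the partition $\{f^{-1}(n)\}$ into finite sets, and a selector for a coarser partition is automatically a selector for any finer one. Coarsenings of finite-to-one partitions into intervals are cofinal under eventual domination of the interval-endpoint functions, so a selector will exist (in the envelope, or in $\mathrm{Fil}_I$) for every finite-to-one $f$ as soon as one exists for each member of a fixed dominating family. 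I would therefore fix once and for all a dominating family $\{g_\alpha:\alpha<\mathfrak{d}\}$ of strictly increasing functions and only ever handle the $\mathfrak{d}$-many interval partitions $P_\alpha$ they induce.

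For (1)$\Rightarrow$(3) and (1)$\Rightarrow$(2) I would start with an independent $I_0$ of size $<\mathfrak{d}$ and build an increasing chain $(I_\xi)_{\xi<\mathfrak{d}}$, at stage $\xi$ adjoining a single set $A_\xi$ that is a selector for the scheduled partition and is independent over $I_\xi$. The point is that such an $A_\xi$ handles that partition everywhere: for any $h\in\mathrm{FF}(I_\xi)$ the set $I^{h\cup\{(A_\xi,0)\}}=I^h\cap A_\xi$ is an infinite selector, so the scheduled $f$ becomes one-to-one on an envelope set below $h$. To produce $A_\xi$ I would force with finite partial selectors of $P_\alpha$ — a countable poset — against the dense sets ``meet $I^h$ once more'' and ``omit another point of $I^h$'', one pair for each $h\in\mathrm{FF}(I_\xi)$; since $|I_\xi|<\mathfrak{d}=\mathfrak{m}_c$ there are fewer than $\mathfrak{m}_c$ dense sets, so Martin's axiom for countable partial orders (the content of $\mathfrak{m}_c$ recorded above) yields a filter meeting them all, hence $A_\xi$. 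A bookkeeping that revisits each $g_\alpha$ cofinally (legitimate since $\mathrm{cf}(\mathfrak{d})>\omega$) guarantees that for every $h$ in the final family and every finite-to-one $f$ some handling stage lies above the finitely many coordinates of $h$, giving the everywhere-$q$ property; for (2) I would run the same recursion but strengthen the genericity so each designated selector stays filter-positive as the family grows, placing it in $\mathrm{Fil}_I$.

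The implications (3)$\Rightarrow$(4) and (2)$\Rightarrow$(4) are immediate: taking $h=\varnothing$ in the everywhere-$q$ definition shows every everywhere-$q$ family is $q$, and if $f$ is one-to-one on $A\in\mathrm{Fil}_I$ then, extending $\varnothing$ to some $k$ with $I^k\subseteq A$, it is one-to-one on the envelope set $I^k$, so every family with a $q$-filter is $q$-independent. Both facts pass verbatim to the corresponding generic-existence statements.

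The real work is the converse (4)$\Rightarrow$(1), which I would prove contrapositively: assuming $\mathfrak{m}_c<\mathfrak{d}$, fix a witness $H=\{h_\gamma:\gamma<\mathfrak{m}_c\}$ and build one independent family of size $<\mathfrak{d}$ with no $q$-independent extension. Writing $\omega$ as a disjoint union of blocks $B_n=\prod_{\gamma\in F_n}M_n$, with $M_n\to\infty$ and every finite subset of $\mathfrak{m}_c$ occurring as $F_n$ infinitely often, the coordinate sets $A_\gamma=\bigcup_n\{(v_\delta)_{\delta\in F_n}:\gamma\in F_n,\ v_\gamma=h_\gamma(n)\}$ are genuinely independent. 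If some $J\supseteq\{A_\gamma\}$ were $q$-independent, a selector $S=J^h$ for the block partition would, by independence of $J$ over the undecided $A_\gamma$, satisfy $v_\gamma=h_\gamma(n)$ on infinitely many blocks for all but the finitely many $\gamma$ named by $h$; patching those finitely many coordinates by hand should then yield a single function that no member of $H$ is everywhere-different from, contradicting that $H$ witnesses $\mathfrak{m}_c$. The hard part is exactly this extraction: arranging the block coding so that one selector of one partition simultaneously records agreement with every $h_\gamma$ and collapses to a single diagonal function on $\omega$, rather than a separate function per coordinate, and checking that the merge is not defeated by the extra sets $J$ may contain. That is where I expect the argument to be most delicate, and it is the step I would spend the most care on.
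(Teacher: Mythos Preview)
Your forcing for $(1)\Rightarrow(3)$ and the trivial implications are essentially what the paper does. There is a small gap in your $(1)\Rightarrow(2)$: making each selector $A_\xi$ ``filter-positive'' is not the same as placing it in $\mathrm{Fil}_I$, and if $A_\xi$ is itself a member of the family then no $h$ with $h(A_\xi)=1$ can ever be refined into $A_\xi$. The paper's device is not to adjoin the rare set $A$ itself but a countable independent family $\{J_n:n\in\omega\}$ with each $J_n\subseteq A$ and $I\cup\{J_n\}$ still independent; then any $h$ extends via an unused $J_n$ on side $0$ to give $I^k\subseteq J_n\subseteq A$, so $A\in\mathrm{Fil}_J$ automatically.

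The substantive gap is your $(4)\Rightarrow(1)$. In your product coding a selector for the block partition returns one \emph{tuple} $(v_\delta)_{\delta\in F_n}$ per block, and there is no diagonal map turning tuples into a single value $g(n)$: knowing that the selector meets each $A_\gamma$ infinitely often pins down the $\gamma$-th coordinate on scattered blocks, but different $\gamma$'s sit on different coordinates and cannot be merged into one function on $\omega$. (There is also the unaddressed constraint that $v_\gamma=h_\gamma(n)$ needs $h_\gamma(n)<M_n$.) The paper sidesteps both issues with a different coding and a genuine use of $\mathfrak d$ that your sketch lacks: the elements of the ground set are themselves partial functions $p:B_n\to f(n)$, so a selector literally unions to a partial function on $\omega$; and the bound $f$ is produced by noting that $\{f_S:S\in[H]^{<\omega}\}$ has size $\lvert H\rvert<\mathfrak d$, hence is not dominating, so some $f$ escapes every $f_S$ infinitely often, guaranteeing that for each finite $S\subseteq H$ infinitely many blocks have room to copy all $h\in S$ at once. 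This $\mathfrak d$-step together with the function-valued coding is precisely the idea your flagged ``hard part'' is missing.
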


These statements improve previous results from \cite{Perron}.

\section{Proofs}

We present the proofs to the theorems as a series of lemmas, and we need the following definition. If $f$ and $g$ are functions of $\omega$ we say that $f$ and $g$ are almost disjoint if $\{ n \in \omega : f(n) = g(n) \}$ is finite. A collection $A$ of functions on $\omega$ is called almost disjoint if each pair of elements from $A$ is almost disjoint.

For Theorem 1, 1 $\rightarrow$ 2 has been demonstrated as Cor 59  in \cite {Perron}, and it is clear by the definitions that 2 $\rightarrow$ 3. To complete the theorem we follow the work of Canjar in \cite{Canjar}.

\begin{lem}[3 $\rightarrow 1$ in Theorem 1]
	If selective independent families exist generically then $m_c = \mathfrak{c}$.
\end{lem}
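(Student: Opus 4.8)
The plan is to argue by contraposition: assuming $\mathfrak{m}_c < \mathfrak{c}$, I would produce a single independent family $I$ with $|I| < \mathfrak{c}$ that admits no selective extension, which witnesses that selective independent families do not exist generically. First I would convert the strict inequality into a usable combinatorial object. By the definition of $\mathfrak{m}_c$ there is a family $H = \{h_\alpha : \alpha < \kappa\}$ with $\kappa = \mathfrak{m}_c < \mathfrak{c}$ such that every $g$ on $\omega$ agrees with some $h_\alpha$ at no coordinate. Passing to the equivalent formulation on the compact space $\prod_n (n+1)$, and using the characterization recorded in the introduction that $\mathfrak{m}_c$ is exactly the least cardinal at which Martin's axiom for countable partial orders fails, I obtain a countable poset $P$ of finite approximations to a selector together with $\kappa$ dense sets $\{D_\alpha : \alpha < \kappa\}$ that no single filter on $P$ can meet.

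Next I would realize $\omega$ as a disjoint union of finite intervals $\bigsqcup_n I_n$ with $|I_n| \to \infty$, so that transversals of the partition $\{I_n\}$ correspond to points of the selector space and the graphs of the $h_\alpha$ become specific transversals. I would then build an independent family $I$ of size $\kappa + \aleph_0 < \mathfrak{c}$ with two features. The first is a distinguished \emph{finite-to-one} function $\Phi$, essentially the map sending a point to the index of its interval; since every cell $I^h$ is infinite, $\Phi$ can never be constant on a cell of any extension. The second is that enough sets extracted from the poset $P$ are placed into $\mathrm{Fil}_I$ so that the constraints $D_\alpha$ are reflected inside the associated filter. The purpose of making $\Phi$ finite-to-one is that in \emph{any} independent extension $J \supseteq I$, selectivity applied to $\Phi$ must take the injective alternative, producing a cell $J^k$ that is a partial transversal, i.e.\ the graph of a partial selector $g^\ast$ lying in $\mathrm{Fil}_J \supseteq \mathrm{Fil}_I$.

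The crux, and the step I expect to be the main obstacle, is to arrange the encoding so that this one selectivity-produced selector is forced to generate a filter on $P$ meeting \emph{all} $\kappa$ of the dense sets $D_\alpha$ simultaneously; this contradicts the failure of Martin's axiom for countable partial orders, so no selective $J$ extending $I$ can exist, and generic existence fails. The difficulty is precisely that abstract selectivity only returns injectivity on a single cell, whereas the obstruction involves $\kappa$-many requirements at once, so the entire burden falls on the construction of $I$ (and of $\mathrm{Fil}_I$) to make every admissible transversal automatically $P$-generic. I would try to discharge this by building $I$ so that each $D_\alpha$ corresponds to a member of $\mathrm{Fil}_I$ and invoking the richness of $\mathrm{Fil}_J$, in the spirit of the q-filter discussion, to intersect the selector with every constraint; should plain selectivity prove too weak on a single cell, I would fall back on the equivalence with everywhere selective families used in the forward directions, which permits refining an arbitrary finite condition rather than committing to one global cell. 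Keeping $I$ genuinely independent while loading in all these constraints is the delicate bookkeeping, and it is exactly the point at which I would mirror Canjar's filter construction in \cite{Canjar}.
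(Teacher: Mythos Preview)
Your overall architecture matches the paper's: work contrapositively, partition $\omega$ into growing finite blocks $B_n$, pass to the countable set $X=\bigcup_n X_n$ of functions on those blocks, use the block-index map $N$ as the finite-to-one test function (with a transferred countable independent family $J'$ present to exclude the constant alternative), and read a selector off the cell on which selectivity makes $N$ injective. The gap is precisely where you place it yourself: the encoding of the $\kappa$ constraints.

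You propose to route them through $\mathrm{Fil}_I$ via the Martin's-axiom reformulation, but you never say which sets are to land in $\mathrm{Fil}_I$ or how to force them there while keeping $I$ independent and small; and your fallback to everywhere-selective is circular here, since $3\Rightarrow 2$ in Theorem~1 is obtained only via $3\Rightarrow 1\Rightarrow 2$. The paper sidesteps all of this by staying with the function definition of $\mathfrak{m}_c$ and inserting the constraints into $I$ \emph{itself}, not into its filter. Concretely: take the witnessing family $H$ to be almost disjoint, and for each $h\in H$ set
\[
A_h \;=\; \{\, p\in X : p(b)=h(b)\ \text{for some } b\in B_{N(p)} \,\};
\]
then $I=\{A_h:h\in H\}\cup J'$ is independent (almost-disjointness of $H$ is exactly what makes the required Boolean combinations nonempty once $n$ is large). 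If $E=(I')^{k}$ is the cell returned by selectivity in an extension $I'\supseteq I$, then for every $h$ with $A_h\notin\mathrm{dom}(k)$ the refinement $E\cap A_h$ contains a cell and is hence nonempty, so some $p\in E$ agrees with $h$ on its block. Since $\mathrm{dom}(k)$ is finite, the partial function $f=\bigcup E$ already agrees with all but finitely many $h\in H$, and one then extends $f$ to a total $g$ handling the remaining finitely many. Placing the $A_h$ in $I$ rather than in $\mathrm{Fil}_I$ is what makes ``meets all but finitely many constraints'' automatic for \emph{every} cell, eliminating the bookkeeping you were worried about.
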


\begin{proof}
	Let $H$ be a collection of less than $\mathfrak{c}$-many functions on $\omega$ and we look for a function $g$ on $\omega$ such that for every $h \in H$ $g(b) = h(b)$ for some $b \in \omega$. We can assume without loss of generalization that $H$ is a collection of almost-disjoint functions.
	
	Let $\{B_n : n \in \omega \}$ be a partition of $\omega$ such that $|B_n| = n$. Let $X_n = \{p : p: B_n \rightarrow \omega \}$ and $X = \bigcup \{X_n : n \in \omega \}$. Note that $X$ is a countable set, and we will work with independent families on $X$. Define a function $N : X \rightarrow \omega$ via $N(p) = n$ where $n$ is the unique $n \in \omega$ such that $p \in X_n$.

	Let $J = \{C_k : k \in \omega \}$ be a countable independent family on $\omega$ and we create an independent family $J^\prime$ on $X$. Define a map $\phi$ where if $A \subseteq \omega$ then $\phi(A) = \bigcup \{X_n : n \in A)$. Note that the image of a non-empty set is non-empty, and the image of an intersection of sets will the intersection of the images. Thus $J^\prime = \{ \phi(C_k) : k \in \omega \}$ is an independent family.

	For $h \in H$ define $A_h = \{ p \in X: p(b)=h(b)$ for some $b \in B_{N(p)} \}$. Let $I = \{A_h : h \in H \} \cup J^\prime$ and we show that $I$ is an independent family on $X$.
	
	Let $E \in ENV(J^\prime)$ and $X, Y \subseteq H$ such that $X$ and $Y$ are finite, and $X \cap Y = \emptyset$. Find $n \in \omega$ large enough such that $X_n \subseteq E, |X| < n$, and $x(k) \not = y(k)$ for every $x \in X, y \in Y$, and $k \in B_n$. We can then find $p \in X_n$ such that $p \in A_x$ and $p \not \in A_y$ for every $x \in X$ and $y \in Y$.
	
	By our assumption, extend $I$ to a selective independent family $I^\prime$. Find $E \in$ ENV($I^\prime$) such that $N \upharpoonright E$ is 1-1 or constant. This cannot be constant because we included $J^\prime$. The partial function $f = \bigcup \{p : p \in E \}$ then agrees with all but finitely many $h \in H$, and since the domain of $f$ is co-countable we can extend $f$ to a function $g$ that agrees with all $h \in H$.

\end{proof}

For Theorem 2, only $1 \rightarrow 3$ was explicitly demonstrated as Theorem 62 in \cite{Perron}, and we leave the proof that 1 $\rightarrow$ 2 as an exercise to the reader.

\begin{lem}[3 $\rightarrow 1$ in Theorem 2]
	If p-independent families exist generically then $\mathfrak{d} = \mathfrak{c}$.
\end{lem}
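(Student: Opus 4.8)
The plan is to prove the contrapositive, adapting the structure of the previous lemma's proof. Suppose $\mathfrak{d} < \mathfrak{c}$; then there is a dominating family $D$ of functions on $\omega$ with $|D| < \mathfrak{c}$. The goal is to build an independent family of size less than $\mathfrak{c}$ that cannot be extended to any p-point-independent family, witnessing the failure of generic existence. The key structural idea, mirroring the selective case, is to work on a countable coding set $X$ built from a partition $\{B_n : n \in \omega\}$ of $\omega$, together with a countable independent family $J'$ pulled back along a map $\phi$ so that the naturally-associated projection $N : X \to \omega$ is never constant on any member of the envelope of the family we construct.

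First I would set up the same scaffolding as the previous proof: fix a partition $\{B_n : n \in \omega\}$ with $|B_n| = n$ (or some suitable increasing sizes), let $X_n$ be an appropriate finite set of functions or codes on $B_n$, put $X = \bigcup_n X_n$, and carry over the countable independent family $J'$ obtained by transporting a countable independent family on $\omega$ through $\phi$. The inclusion of $J'$ again forces $N$ to be non-constant on every envelope member of any extension. Next, for each $g \in D$ I would define a coding set $A_g \subseteq X$ in analogy with the $A_h$ of the previous lemma, chosen so that being one-to-one or finite-to-one on the relevant envelope member translates into a statement about domination by $g$. The family $I = \{A_g : g \in D\} \cup J'$ should again be verified to be independent by the same finite-approximation argument: given a finite piece of the envelope and disjoint finite subsets $X, Y \subseteq D$, one finds a large $n$ with $X_n$ inside the envelope member and a point $p \in X_n$ landing in each $A_x$ and avoiding each $A_y$.

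The crucial difference from Theorem 1 is that for p-point families the dichotomy is \emph{finite-to-one versus constant} rather than \emph{one-to-one versus constant}, so the witnessing function $N$ (or a closely related function) must be designed so that $N$ being finite-to-one on an envelope member of a hypothetical p-point extension would yield a single function dominating every member of $D$, contradicting that $D$ was supposed to witness $\mathfrak{d} < \mathfrak{c}$ only through a family of that size. Concretely, I expect to argue that if $I$ extended to a p-point-independent family $I'$, then applying the p-point property to $N$ gives an envelope member $E$ on which $N$ is finite-to-one (constancy being excluded by $J'$), and the partial function $\bigcup\{p : p \in E\}$ would then be dominated by, or bounded in terms of, no single $g \in D$ — producing a function not dominated by any element of $D$ and contradicting that $D$ is dominating.

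The main obstacle will be engineering the sets $A_g$ and the partition sizes so that the finite-to-one condition transmits correctly into a domination failure. In the selective case one only needed to extract one value per block to reconstruct a function agreeing with almost every $h$; here finite-to-one on $E$ permits $N$ to take each value on boundedly many points of $E$, and I must ensure that this bound, as $n$ ranges, assembles into a genuine function on $\omega$ that outruns every $g \in D$. Getting the block sizes $|B_n|$ and the encoding of the functions $g$ into the $A_g$ to interact so that ``$N$ finite-to-one on $E$'' is equivalent to ``$E$ codes a function not dominated by any $g$'' is the delicate combinatorial heart of the argument, and it is where the proof will diverge most from the template of the preceding lemma.
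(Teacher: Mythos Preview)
Your outline matches the paper's argument almost exactly: same scaffolding $B_n, X_n, X, N, J'$, same sets $A_h$, same independence check, then apply the p-point property to $N$ and use that constancy is ruled out by $J'$. The paper phrases it directly (take any $H$ with $|H|<\mathfrak{c}$ and show it is not dominating) rather than as a contrapositive, but that is purely cosmetic.

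Two corrections to the final paragraph. First, no redesign of the sets $A_g$ or the block sizes is needed; the paper reuses \emph{literally} the same $A_h = \{p \in X : p(b) = h(b)\text{ for some }b \in B_{N(p)}\}$ from the selective case. Second, when $N\restriction E$ is merely finite-to-one, $\bigcup\{p : p \in E\}$ is generally \emph{not} a function, since several $p$'s may share a domain point with conflicting values. The fix is immediate and is the whole ``delicate heart'' you are worried about: finite-to-one means that for each $n\in\omega$ only finitely many $p\in E$ have $n\in\mathrm{dom}(p)$, so one may set $f(n) > \max\{p(n) : p\in E,\ n\in\mathrm{dom}(p)\}$. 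For every $h$ outside the finitely many mentioned in the Boolean combination defining $E$, independence gives some $p\in E\cap A_h$, hence $f(b)>p(b)=h(b)$ at some $b$, so $h$ does not dominate $f$; extend $f$ on the countably many missing coordinates to beat the remaining finitely many $h$'s as well. There is no further combinatorics to engineer.
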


\begin{proof}
	As before, let $H$ be a collection of less than $\mathfrak{c}$-many functions and assume that $H$ is almost disjoint. We show that $H$ cannot be a dominating family.
	
	Find $B_n, X_n, X, N, J$, and $I$ as before.
	
	Extend $I$ to a p-independent family, so find $E \in$ ENV($I^\prime$) such that $N \upharpoonright E$ is finite-to-one. For any $n \in \omega$ there is only finitely many $p \in E$ such that $n \in$ dom($p$) so we can find a partial function $f$ such that $f(n) > p(n)$ for any $p \in E$ and any $n \in$ dom($p$). Again, $f$ has co-countable domain and is not dominated by all but finitely many $h \in H$ so we can extend $f$ to a function $g$ that is not dominated by any $h \in H$.
\end{proof}

For Theorem 3, only 1 $\rightarrow$ 4 had been demonstrated so we present a proof of $1 \rightarrow 2$. To do so we need a lemma for how to extend an independent family in the face of a finite-to-one function.

\begin{lem}
	If $I$ is an independent family with $|I| < \mathfrak{m}_c$ and $f$ is a finite to one function on $\omega$ then there is an independent family $J$ and a set $A \subseteq \omega$ such that $I \subseteq J$, $A \in$ Fil$_{J}$, and A is $f$-rare.
\end{lem}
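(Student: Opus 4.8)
The plan is to exploit the hypothesis $|I| < \mathfrak{m}_c$ by running a Martin's-axiom argument for a countable partial order. Recall that $\mathfrak{m}_c$ is exactly the bound for MA for countable posets, so a filter meeting fewer than $\mathfrak{m}_c$-many dense sets is available. I will use such a filter to build, \emph{simultaneously}, a countable collection of new sets $\{B_i : i \in \omega\}$ (setting $J = I \cup \{B_i : i \in \omega\}$) together with the set $A$, arranging that $J$ is independent, that $A$ is $f$-rare (so $f \upharpoonright A$ is one-to-one), and that for every $h \in$ FF($J$) some extension $k \supseteq h$ has $J^k \subseteq A$, which is precisely $A \in$ Fil$_J$. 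The single observation that makes the whole scheme possible is that, since $f$ is finite-to-one, every infinite $E \subseteq \omega$ meets infinitely many fibers $f^{-1}(m)$ and hence contains an infinite $f$-rare subset; thus an $f$-rare set can still be ``large'' enough to swallow an envelope element.

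First I would fix the forcing. A condition is a triple $p = (s_p, t_p, F_p)$, where $s_p$ decides $B_i \cap n_p$ for finitely many indices $i < m_p$ up to a finite height $n_p$, where $t_p$ is a finite subset of $\omega$ recording the decided part of $A$ and is required to be $f$-rare, and where $F_p$ is a finite ``promise'' $F_p \subseteq I$. The ordering is extension of $s_p$ and $t_p$ together with $F_{p'} \supseteq F_p$, subject to the promise that the newly decided bits are laid down so as not to empty out any Boolean combination of the coordinates in $F_p$ with the $B_i$'s. This poset is countable, so MA for countable orders applies.

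Next I would specify the families of dense sets, checking that there are fewer than $\mathfrak{m}_c$ of them. There are countably many bookkeeping dense sets forcing each $B_i$ to be infinite and coinfinite and forcing $A$ to be infinite (always possible by choosing a point from a fresh fiber, preserving $f$-rareness). Because any $h \in$ FF($J$) mentions only finitely many of the $B_i$, the ``pushing into $A$'' requirements can be indexed by a finite set of $I$-coordinates, a finite set of new-set coordinates, and a pattern; there are $|I| \cdot \aleph_0 = |I| < \mathfrak{m}_c$ of these, and each asks that the condition be extended so that the corresponding piece $J^h$ of the envelope is refined (using further $B_i$) to a nonempty subpiece lying inside $A$. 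A final family of dense sets guarantees that every finite Boolean combination of $F_p$-coordinates and $B_i$-coordinates is realized, so that the generic $J$ is genuinely independent. Reading $B_i$ and $A$ off the generic filter $G$ then yields all three conclusions: $I \subseteq J$ and independence from the last family, $f$-rareness of $A$ from the hard constraint on $t_p$, and $A \in$ Fil$_J$ from the pushing-into-$A$ family.

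The main obstacle, and the heart of the argument, is the one-step extension lemma that shows the ``pushing into $A$'' sets are in fact dense: given a condition with promise $F$ and a target piece $E = I^h$, I must extend the finitely many $B_i$ so as to carve out of $E$ a nonempty (ideally infinite) subpiece contained in $A$, \emph{while} keeping every Boolean combination with the coordinates of $F$ nonempty. The difficulty is the tension between $A$ being thin, i.e. $f$-rare, and the requirement that the carved subpiece be nonempty and that independence with $F$ survive. I expect to resolve this by using that $E \cap A$ is infinite (guaranteed because $E$ is infinite and meets infinitely many fibers, so it has an infinite $f$-rare subset) to feed fresh, distinct-fiber elements both into the relevant $B_i$ and into $A$ at once, so that the refinement both lands in $A$ and leaves all promised combinations infinite. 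Verifying that this simultaneous assignment never collapses an independence promise is where the real work lies.
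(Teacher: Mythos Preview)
Your overall plan---building $A$ and the new sets $B_i$ simultaneously by a single MA-for-countable-posets argument---is reasonable, and the key observation that every infinite set contains an infinite $f$-rare subset is exactly what drives the density checks. But there is a genuine error: your poset is \emph{not} countable. A condition carries a component $F_p$, a finite subset of $I$, and the hypothesis only says $|I| < \mathfrak{m}_c$; so $I$ may well be uncountable, $[I]^{<\omega}$ is then uncountable, and so is your poset. The invariant $\mathfrak{m}_c$ gives you MA only for \emph{countable} orders, so the argument fails as written. The repair is to drop $F_p$ altogether: the ``promise'' it is meant to enforce is vacuous at any finite stage (a finite condition can never empty a Boolean combination, since the $B_i$ are undetermined above $n_p$), and the independence of $J = I \cup \{B_i : i \in \omega\}$ is already secured by your last family of dense sets, indexed by elements of ENV$(I)$ together with a finite pattern on the new coordinates---hence $|I| \cdot \aleph_0 < \mathfrak{m}_c$ many. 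With conditions reduced to pairs $(s_p, t_p)$ the poset is countable and the rest of your outline goes through.

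For comparison, the paper separates the two tasks rather than interleaving them. It first forces with the very simple countable poset $P = \{F \subseteq \omega : F \text{ is finite and } f\text{-rare}\}$, using one dense set $D_E = \{F \in P : F \cap E \neq \emptyset\}$ per $E \in$ ENV$(I)$, to obtain an $f$-rare $A$ that meets every envelope set of $I$. Only afterwards does it find a countable family $\{J_n : n \in \omega\}$ with each $J_n \subseteq A$ and $I \cup \{J_n\}$ independent; then any $h \in$ FF$(J)$ can be extended by adjoining a fresh $J_n$ with value $0$, giving $J^k \subseteq J_n \subseteq A$, so $A \in$ Fil$_J$ comes for free. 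The paper's decomposition keeps the forcing trivial and defers the construction of the new independent sets to a separate (and easier) step; your one-shot construction is more elaborate but, once the side conditions are removed, also works.
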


\begin{proof}
	Let $P = \{F \subseteq \omega : F$ is finite and $F$ is $f$-rare $\}$. For each $E \in$ ENV($I$) the set $D_E = \{F \in P: F \cap E \not = \emptyset\}$ is dense in $P$. Since $|$ENV($I$)$| = |I| < \mathfrak{m}_c$ then we can find a generic filter $G$ on $P$ such that $G \cap D_E$ for each $E \in$ ENV($I$). Let $A = \bigcup G$. Note that $A$ is $f$-rare.

	Since $G \cap D_E$ for each $E \in$ ENV($I$) then $A \cap E \not = \emptyset$. Find a countable independent family $\{ J_n : n \in \omega \}$ such that $J_n \subseteq A$ and $I \cup \{ J_n : n \in \omega \}$ is independent. Then $J = I \cup \{J_n : n \in \omega \}$ works as needed.
\end{proof}

\begin{lem}[1 $\rightarrow 2$ in Theorem 3]
	If $m_c = \mathfrak{d}$ then independent families with q-filters exist $\mathfrak{d}$-generically.
\end{lem}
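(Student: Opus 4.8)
The plan is to run a transfinite recursion of length $\mathfrak{d}$, feeding the preceding lemma one finite-to-one function at a time, and to engineer those functions so that a $\mathfrak{d}$-sized supply suffices to defeat every finite-to-one function at once. First I would fix a dominating family $\{ d_\alpha : \alpha < \mathfrak{d} \}$ of strictly increasing functions and turn each $d_\alpha$ into an interval partition $\mathcal{Q}_\alpha$ (endpoints $0, d_\alpha(0), d_\alpha(d_\alpha(0)), \dots$), writing $g_\alpha$ for the finite-to-one function whose fibers are the blocks of $\mathcal{Q}_\alpha$; standard facts about the dominating number make the $\mathcal{Q}_\alpha$ cofinal, modulo finite, in the coarsening order on interval partitions. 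I would then set $I_0 = I$, at successor stages apply the previous lemma to $(I_\alpha, g_\alpha)$ to obtain $I_{\alpha+1} \supseteq I_\alpha$ together with a $g_\alpha$-rare set $A_\alpha \in \mathrm{Fil}_{I_{\alpha+1}}$, take unions at limits, and put $J = \bigcup_{\alpha < \mathfrak{d}} I_\alpha$. Since $|I| < \mathfrak{d}$ and each step adds only countably many sets, every $I_\alpha$ has size at most $\max(|I|, |\alpha|, \aleph_0) < \mathfrak{d} = \mathfrak{m}_c$, so the hypothesis is precisely what keeps the previous lemma applicable throughout; this is the sole use of $\mathfrak{m}_c = \mathfrak{d}$.

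Next I would record three easy closure facts. Independence of $J$ is immediate from the finite character of independence. The operation Fil is monotone: if $I_\alpha \subseteq J$ and $A \in \mathrm{Fil}_{I_\alpha}$, then given $h \in \mathrm{FF}(J)$ one splits $h$ into its restriction to $I_\alpha$ and its restriction to $J \setminus I_\alpha$, extends the former inside $I_\alpha$ so as to land in $A$, and re-appends the latter; the resulting $k$ still satisfies $J^k \subseteq A$, so $A \in \mathrm{Fil}_J$. In particular every $A_\alpha$ belongs to $\mathrm{Fil}_J$. Finally $\mathrm{Fil}_J$ is a free filter: any single point $x$ can be expelled from $J^k$ by appending one fresh coordinate $B \in J$ with the value that omits $x$, so $\mathrm{Fil}_J$ contains every cofinite set, which lets me discard finite error later.

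The heart of the argument is the reduction showing these $\mathfrak{d}$ many rare sets yield a q-filter. Given an arbitrary finite-to-one $f$, I would build greedily an interval partition $\mathcal{I}_f$ whose endpoints $m_0 < m_1 < \cdots$ grow fast enough that every fiber meeting a block is swallowed by that block together with the one before it; an elementary check then shows that $f(a) = f(b)$ with $a \ne b$ forces $a$ and $b$ into $\mathcal{I}_f$-blocks whose indices differ by at most one. This does not yet give injectivity on a single rare set, since a same-fiber pair may still straddle one block boundary, so I would pass to the two half-coarsenings $\mathcal{I}_f^{\mathrm{even}}$ and $\mathcal{I}_f^{\mathrm{odd}}$ obtained by keeping the even-indexed, respectively the odd-indexed, endpoints of $\mathcal{I}_f$: any pair of consecutive indices lies inside a single block of at least one of the two, so a set rare for both must be one-to-one under $f$. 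Using cofinality of coarsening I would choose $\alpha, \beta < \mathfrak{d}$ with $g_\alpha$ coarsening $\mathcal{I}_f^{\mathrm{even}}$ and $g_\beta$ coarsening $\mathcal{I}_f^{\mathrm{odd}}$ modulo finite; since rareness passes to subsets and to finer partitions, a cofinite piece of $A_\alpha \cap A_\beta$ is rare for both $\mathcal{I}_f^{\mathrm{even}}$ and $\mathcal{I}_f^{\mathrm{odd}}$, hence $f$ is one-to-one on it, and it lies in $\mathrm{Fil}_J$ because $\mathrm{Fil}_J$ is a free filter closed under intersection. As $f$ was arbitrary, $J$ has a q-filter.

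The main obstacle is exactly this last reduction: the previous lemma disposes of one finite-to-one function at a time and there are $\mathfrak{c}$ of them, so everything hinges on compressing them into a $\mathfrak{d}$-sized family. The greedy partition $\mathcal{I}_f$ together with the even/odd-coarsening trick is what converts the naive, and false, hope that a single rare set is automatically injective into the workable statement that two suitably chosen rare sets intersect to an injective set, while the coarsening order having cofinality $\mathfrak{d}$ is what keeps the index set small. By contrast, the closure facts about $J$ and the bookkeeping on cardinalities are routine.
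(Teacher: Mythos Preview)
Your proposal is correct and shares the paper's overall architecture---a transfinite recursion of length $\mathfrak{d}$ that invokes the preceding lemma at each successor stage, with the hypothesis $\mathfrak{m}_c = \mathfrak{d}$ used solely to keep the size bound below $\mathfrak{m}_c$---but your reduction step is considerably more elaborate than the paper's. Where you pass to interval partitions, appeal to cofinality of the coarsening order, split into even/odd coarsenings, and then intersect two rare sets inside $\mathrm{Fil}_J$, the paper handles the reduction in one stroke: given a finite-to-one $f$, define $\bar f(x)=\max\{n:f(n)=f(x)\}$, choose a single $f_\alpha$ in the dominating family with $f_\alpha>\bar f$, and observe that the corresponding $A_\alpha$ is already one-to-one under $f$ (for $a<b$ in $A_\alpha$ one has $b>f_\alpha(a)>\bar f(a)$, so $b$ lies outside the $f$-fiber of $a$). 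Thus one rare set suffices and there is no need for the even/odd trick, the intersection of two witnesses, or the auxiliary appeal to cofinite sets in $\mathrm{Fil}_J$. What your version buys is explicitness: you spell out the monotonicity $\mathrm{Fil}_{I_\alpha}\subseteq\mathrm{Fil}_J$ and the cardinality bookkeeping $|I_\alpha|<\mathfrak{d}$, both of which the paper leaves implicit.
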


\begin{proof}
	Let $D = \{f_\alpha : \alpha < \mathfrak{d} \}$ be a dominating family and we work by induction. Let $I_0$ be an independent family with $|I_0| < \mathfrak{d}$. When $\alpha$ is a limit define $I_\alpha = \bigcup_{\beta < \alpha} I_\beta$. For any $\alpha < \mathfrak{d}$ we can, by the previous lemma, find $A_\alpha$ and extend $I_\alpha$ to $I_{\alpha + 1}$ such that $A_\alpha$ is $f_\alpha$-rare. Then $I_\mathfrak{d}$ has a q-filter.
	
	To see, let $f$ be a finite-to-one function on $\omega$. Define $\bar{f}(x) = max\{n \in \omega : f(n) = f(x)\}$. Then there is a some $f_\alpha$ that dominates $\bar{f}$, and there a set $A_\alpha \in$ Fil$_{I_\alpha}$ such that $E_\alpha$ is $f_\alpha$ rare. Since $f_\alpha$ dominates $\bar{f}$ then $A_\alpha$ is also $\bar{f}$-rare. Then $f \restriction A_\alpha$ must be one to one since if $a, b \in A_\alpha$ with $a < b$ then $\bar{f}(a) < b$ so $f(a) \not = f(b)$.
\end{proof}

It is apparent by the definitions that $2 \rightarrow 3$ and $3 \rightarrow 4$.

\begin{lem}[4 $\rightarrow 1$ in Theorem 3]
	If q-independent families exist $\mathfrak{d}$-generically then $m_c = \mathfrak{d}$.
\end{lem}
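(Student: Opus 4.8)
The plan is to establish the one nontrivial inequality $\mathfrak{d}\le\mathfrak{m}_c$; combined with $\mathfrak{m}_c\le\mathfrak{d}$ from the introduction this gives $\mathfrak{m}_c=\mathfrak{d}$. Since $\mathfrak{m}_c$ is the least cardinality of a family with its defining ``everywhere different'' property, it suffices to show that no $H$ with $|H|<\mathfrak{d}$ is such a witness, i.e. for every such $H$ (which I may take infinite, and, exactly as in the two previous lemmas, almost disjoint) to produce a $g$ on $\omega$ with $g(n)=h(n)$ for some $n$, for each $h\in H$. I would follow the Canjar-style template used above: code $H$ into a countable independent family, extend it by the hypothesised generic existence, and read $g$ off a set in the envelope on which a chosen function is well behaved. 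The one structural change forced by $q$ rather than selectivity is that the controlling function must be \emph{finite-to-one}, so the blocks have to be finite; arranging this against unbounded values of the $h$'s is what consumes the hypothesis $|H|<\mathfrak{d}$.

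Concretely I would fix $\Omega\subseteq\omega$ with $\omega\setminus\Omega$ infinite and partition $\Omega=\bigcup_n B_n$ into finite blocks with $|B_n|=n$ and $\min B_n\to\infty$, keeping $\omega\setminus\Omega$ in reserve. Put $\hat h(n)=\max\{h(b):b\in B_n\}$. The family $\{\,\max_{h\in\mathcal W}\hat h:\mathcal W\in[H]^{<\omega}\,\}$ has size $|H|<\mathfrak{d}$, hence is not dominating; using that $\mathfrak{d}$ is unchanged when domination is required only eventually, I may pick one $m$ with $m(n)\ge n$ that no member eventually dominates, so that for each finite $\mathcal W\subseteq H$ there are infinitely many $n$ with $m(n)\ge\max_{h\in\mathcal W}\hat h(n)$. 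Now let $X_n=\{p:p:B_n\to\{0,\dots,m(n)\}\}$, which is finite, set $X=\bigcup_n X_n$, and let $N:X\to\omega$ with $N(p)=n$ for $p\in X_n$; then $N$ is finite-to-one. Coding $H$ as before, $A_h=\{p\in X:p(b)=h(b)\text{ for some }b\in B_{N(p)}\}$ and $I=\{A_h:h\in H\}$ is a family on the countable set $X$ with $|I|<\mathfrak{d}$.

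To see $I$ is independent, given finite disjoint $\mathcal X,\mathcal Y\subseteq H$ I set $\mathcal W=\mathcal X\cup\mathcal Y$ and choose $n$ so large that $n>|\mathcal X|$, that $B_n$ lies beyond all the (finitely many) coincidence sets of pairs from $\mathcal W$—so the members of $\mathcal W$ are pairwise distinct throughout $B_n$—and that $m(n)\ge\max_{h\in\mathcal W}\hat h(n)$; such $n$ exist by the choice of $m$ and almost disjointness. Since every value on $B_n$ of every $h\in\mathcal W$ is then at most $m(n)$, I can build $p\in X_n$ agreeing with each $x\in\mathcal X$ at a private point of $B_n$ and taking, elsewhere on $B_n$, values avoiding all $y(b)$ with $y\in\mathcal Y$ (there is room since $m(n)\ge n>|\mathcal Y|$). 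This $p$ witnesses the corresponding $I^\sigma\neq\emptyset$. Extending $I$ to a $q$-independent family $I'$ and applying the $q$-property to the finite-to-one $N$, I get $E\in$ ENV$(I')$ with $N\upharpoonright E$ one-to-one; thus $E$ meets each block at most once, $f=\bigcup\{p:p\in E\}$ is a single partial function, and as $E$ is infinite its domain is an infinite—hence co-infinite—subset of $\Omega$.

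Writing $E=(I')^\sigma$, the only $h$ that $f$ might miss are the finitely many with $\sigma(A_h)=1$: when $\sigma(A_h)=0$ we have $E\subseteq A_h$ so every $p\in E$ already agrees with $h$, and when $A_h\notin$ dom$(\sigma)$ independence of $I'$ splits $E$ by $A_h$, yielding $p\in E\cap A_h$ and a point where $f$ meets $h$. For each of the finitely many exceptional $h$ I would spend a distinct reserved point of $\omega\setminus\Omega$ to force $g(b)=h(b)$, set $g=f$ on dom$(f)$ and extend arbitrarily elsewhere; the resulting total $g$ meets every $h\in H$, so $H$ is not a witness and $\mathfrak{m}_c\ge\mathfrak{d}$. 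I expect the crux to be exactly the tension the construction is built to resolve: finite-to-oneness of $N$ demands finite blocks with a bounded range, which fights against realizing the agreements $p(b)=h(b)$ when $h(b)$ is large, and it is the dominating-family choice of the bound $m$—available precisely because $|H|<\mathfrak{d}$—that reconciles them and fixes the level of the result at $\mathfrak{d}$ rather than $\mathfrak{c}$.
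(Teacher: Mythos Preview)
Your proof is correct and follows essentially the same approach as the paper: use $|H|<\mathfrak{d}$ to choose a bound $m$ not dominated by the block-maxima of the finite subfamilies of $H$, cut each $X_n$ down to a finite set so that $N$ becomes finite-to-one, code $H$ into $I$ via the sets $A_h$, extend to a $q$-independent family, and read $g$ off an $E$ on which $N$ is one-to-one. The only differences are cosmetic---you omit the auxiliary family $J'$ (unneeded here since the $q$-property yields one-to-one directly, with no ``constant'' case to exclude) and you explicitly reserve $\omega\setminus\Omega$ for the finitely many fix-ups, whereas the paper simply writes ``as before'' at each step.
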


\begin{proof}
	As before, let $H$ be a collection of less than $\mathfrak{d}$-many functions and assume that $H$ is almost disjoint. Find $\{B_n : n \in \omega \}$ as before. For every finite subset $S$ of $H$, define $f_S : \omega \rightarrow \omega$ via $f_S(k) = max \{f(i): f \in S$ and $i \in B_n$ where $k \in B_n \}$. Find a function $f$ that is not dominated by the family $\{ f_S : S$ is a finite subset of $H \}$.
	
	Define $X_n = \{ p: p:B_n \rightarrow f(n) \}$, let $X = \bigcup X_n$, and find $N: X \rightarrow \omega$ as before.
	
	Define the independent family $I$ as before, and extend to a $q$-independent family $I^\prime$. Since $N$ is finite-to-one find $E \in$ ENV($I^\prime$) such that $N \upharpoonright E$ is 1-1, and define $g$ as before.
\end{proof}

\section{Remarks}

The existence of our varieties of independent families is not dependent on the generic existence. Selective and everywhere selective independent families of size $\aleph_1$ exist in the Sacks model where $\mathfrak{c}  > \aleph_1$. Likewise q-independent families of size $\aleph_1$ exist in the Random Real model.

In conjunction with the results from Canjar we see that independent families are directly linked to ultrafilters via generic existence. The conditions that allow the generic existence of ultrafilters are exactly those that allow the generic existence of independent families, and vice versa. This suggests a need to continue studying the link between these two types of objects.

\end{document}